\documentclass[reqno]{amsart}
\usepackage{amssymb, amsmath, amsthm,latexsym,booktabs,todonotes, color, comment, lineno}

\newcommand{\mex}{{\bf mex}}
\newcommand{\Gr}{{\textnormal Gr}}

\def\N{\mathcal{N}}
\def\P{\mathcal{P}}

\theoremstyle{definition}
\newtheorem{definition}{Definition}

\theoremstyle{plain}
\newtheorem{lemma}[definition]{Lemma}
\newtheorem{proposition}[definition]{Proposition}
\newtheorem{theorem}[definition]{Theorem}

\usepackage{todonotes}

\begin{document}


\title{A classification of the Nim-sequences for the two-element subtraction game with a single pass}
\author{Richard M. Low}
\address{Department of Mathematics\\
         San Jose State University\\
         San Jose, CA 95192\\
         USA}
\email{richard.low@sjsu.edu}

\keywords{combinatorial game, Grundy-value sequence, periodicity}

\date{October 18, 2026. Version 0.1\\
\indent
\textit{2020 Mathematics Subject Classification.} 91A46}

\begin{abstract}
In this paper, we provide a complete description of the preperiods, periodic lengths, and the Grundy-value sequences of two-element subtraction games with a single pass option.
\end{abstract}
\maketitle

\section{Introduction} \label{S:intro}

Combinatorial game theory (CGT) was developed in the context of recreational mathematics. In their seminal work, Berlekamp, Conway and Guy \cite{WW2001} established the mathematical framework from which games of complete information could be studied. The power of this theory quickly became apparent and was utilized by many researchers (see Fraenkel's bibliography \cite{Fraen}). Along with its natural appeal, combinatorial game theory has applications to complexity theory, logic, and biology. 

Finite impartial games are an important class of combinatorial games. These games always terminate with a winner.  The set of available move options for both players are the same and perfect play is assumed. 

First, we recall some concepts (for finite impartial games) from CGT. Any terms which are not explicitly defined in this paper can be found in \cite{Alb}. Let $\mathbb{N}$ denote the set of nonnegative integers. For $S\subset {\mathbb{N}}$ , with $S\ne {\mathbb{N}}$, we define
mex$(S) = \min \{t\in {\mathbb{N}} : t\notin S\}$. If $G$ is a game position in
which there are no moves available, we set Gr$(G)=0$.
Then, if $G$ is any other position in the game and
$Q$ is the set of
positions that can be reached from $G$ in one move, then
\begin{equation*}
\textnormal{Gr}(G)=\textbf{mex}\{ \textnormal{Gr}(H) : H\in Q \}
\end{equation*}
and this is called the {\it Grundy-value} of $G$.
 A game $G$ that can be won by the player that made the
 move previous to $G$ is called a $\P$-\textit{position}. We say that $G$ is an $\N$-\textit{position} if
 $G$ can be won by the player to make the next move after $G$.
 Thus, a game $G$ is a $\P$-position
 if and only if \Gr$(G)=0$. \\

\noindent
 \textbf{Remark.} 
     Here are some important observations.
     \begin{itemize}
         \item A game $G$ is a $\P$-position if for each move
         $H$ from $G$, there is a move $K$ from $H$ such that
         $K$ is a $\P$-position.
         \item A game $G$ is an $\N$-position if there exists a
         move $H$ from $G$ such that $H$ is a $\P$-position.
     \end{itemize}

\vspace{10pt}

For decades, Guy and Nowakowski have  maintained a document of unsolved problems in CGT~\cite[pp. 465--466]{Guy}. 
Problem ``A1'' in this document is a long-standing open problem: characterize the structure of finite subtraction games. These games are played in the following way: From a single pile of $n$ stones, two players alternate removing $s_i$ stones, with a specified rule set $S = \{s_1, s_2, \dots, s_k\}$. If a player cannot move, they lose the game.

It is known that subtraction games with finite rule sets have periodic Grundy-value sequences. However, the periods of the games and the presence and structure of any pre-periods, remain largely unknown. In \cite{Austin, Flammenkamp}, some initial analysis on the Grundy-value sequences and their periodicity were made for subtraction games with rule set sizes two and three. Eventually, the case where $|S| = 2$ was completely determined in \cite{Ward_1}. Many Grundy-value sequences (for $|S| = k$) are given in~\cite[pp.~84--85, Table~1]{WW2001}. Additional results on the periodicity of Grundy-value sequences of subtraction games can be found in \cite{Ho, Lar, Ward2, Zhang}. For $S=\{x,y,z\}$ where $z=x+y$, the complete location of the P-positions (zeros) and the N-positions (ones, twos or threes) were described in \cite{Miklos_Post}. In \cite{Lar_Manabe}, Larsson and Manabe describe how to distinguish the N-positions having twos or threes in a restricted  setting. For every subtraction game, certain integers can be augmented to the set without altering the nim sequence of the set. A complete description of adjoinable integers to the set $S=\{x,y,x+y\}$ was given in \cite{Ho}.

In this paper, we study a variant of the subtraction game, which we call the \textit{Subtraction Game with a Single Pass} (denoted by subtraction set $S^*$). \\

\begin{itemize}
\item \textit{Subtraction Game with a Single Pass} is played like a normal subtraction game, with the option of a single pass which can be used by exactly one player. Once the pass option is used, it cannot be used again and the game continues in the usual fashion. The pass option can be used at any time, up to the penultimate move. It cannot be used at the end of the game. The player who cannot make a move loses the game.
\end{itemize}

\vspace{10pt}

\noindent
More specifically, we provide a complete description of the preperiods, periodic lengths, and the Grundy-value sequences of $S^\ast = \{x, y\}$.

\section{Preliminaries} \label{Prelim}

At the beginning of the game one pass is available to the game as a
whole.  Either player may use it, but after it is used it disappears.
A pass leaves the heap size unchanged.  A pass is not legal at a terminal
position. Since $x$ is the least subtraction, a position of size $n$ is terminal
exactly when $0\le n<x$.  Thus the pass is legal exactly when $n\ge x$. The phrase ``single pass'' throughout this paper means \emph{one pass
in total}, not one pass per player.

The availability of the pass produces two natural game states. Let $g(n)$ be the Sprague--Grundy value of the ordinary subtraction game, equivalently the value of the position after the unique pass has already been used. Let $h(n)$
be the Sprague--Grundy value of the position with heap size $n$ while
the unique pass remains available.

The ordinary values satisfy
\begin{equation}
g(n)=
\mex\Bigl(
   \{g(n-x):n\ge x\}
   \cup
   \{g(n-y):n\ge y\}
\Bigr).
\label{eq:g-rec}
\end{equation}

For the pass-available state,
\begin{equation}
h(n)=0,\qquad 0\le n<x,
\label{eq:h-terminal}
\end{equation}
and for $n\ge x$,
\begin{equation}
h(n)=
\mex\Bigl(
  \{h(n-x)\}
  \cup
  \{h(n-y):n\ge y\}
  \cup
  \{g(n)\}
\Bigr).
\label{eq:h-rec}
\end{equation}

The option $g(n)$ in \eqref{eq:h-rec} is the result of using the pass:
the heap does not change, but afterward the pass is no longer available.

Since \eqref{eq:h-rec} has at most three distinct options,
\[
        0\le h(n)\le3.
\]
The value $3$, which never occurs in the ordinary two-element game,
will turn out to occur only in a finite transient.

Throughout we write
\begin{equation}
        y=qx+r,\qquad q\ge1,\qquad 0\le r<x.
\label{eq:division}
\end{equation}

If $a$ is a symbol and $k\ge0$, the notation $a^k$ denotes $k$
consecutive copies of $a$.  Concatenation of words is written by
juxtaposition.\\

We require the explicit ordinary nim-word as an input. Theorem~\ref{thm:ordinary} is the two-element result found in \cite{Ward_1}.

\begin{theorem}[\cite{Ward_1} Ordinary two-element subtraction game]
\label{thm:ordinary}
Let $S=\{x,y\}$, $0<x<y$, and write $y=qx+r$ as in
\eqref{eq:division}.

\begin{enumerate}
\item If $q=2m$ is even, then one ordinary period is
\begin{equation}
        G=(0^x1^x)^m\,0^r2^{x-r}1^r.
\label{eq:ordinary-even}
\end{equation}
For $r=0$, this reduces to
\[
        G=(0^x1^x)^m2^x.
\]
Its least period is $x+y$.\\

\item If $q=2m+1$ is odd and $r>0$, then one ordinary period is
\begin{equation}
        G=(0^x1^x)^{m+1}2^r,
\label{eq:ordinary-odd-r}
\end{equation}
and its least period is $x+y$.\\

\item If $q$ is odd and $r=0$, then
\begin{equation}
        g=(0^x1^x)^\infty,
\label{eq:ordinary-odd-multiple}
\end{equation}
and its least period is $2x$.
\end{enumerate}
\end{theorem}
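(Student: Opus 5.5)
The plan is a direct verification: write down the proposed sequence $g$, show it is periodic with the stated period, and check that it satisfies the recurrence \eqref{eq:g-rec} at every index. Since \eqref{eq:g-rec} determines $g$ uniquely by strong induction on $n$, any sequence satisfying it must be the Grundy sequence. I will view $g$ as a word indexed from $0$. For each $n$ I will read off $g(n-x)$ and $g(n-y)$, treating an index below $x$ (respectively below $y$) as "no option", and confirm that the mex equals the claimed value.

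The key structural fact I will use is that the period $p$ (namely $x+y$ in cases (1) and (2), and $2x$ in case (3)) satisfies $n-y\equiv n+x \pmod p$ when $p=x+y$. Hence, once $n\ge y$, the two options of position $n$ are the entries $x$ steps behind and $x$ steps ahead in the periodic word. The check for $n\ge y$ therefore becomes a local condition on the cyclic word: each letter must equal the mex of the letters $x$ positions before and $x$ positions after it, cyclically. For the initial segment $n<y$ only the backward option exists, and the condition is $g(n)=\mex\{g(n-x)\}$ for $x\le n<y$, with $g(n)=0$ for $n<x$. Since every listed word begins with $(0^x1^x)$ repeated through index $qx-1$, this segment is immediate: the blocks alternate $0,1$ with block length $x$. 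I will also check the first $r$ positions after $qx$ separately.

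For the cyclic check I split into blocks. In case (1), with word $(0^x1^x)^m0^r2^{x-r}1^r$ of length $2mx+x+r=x+y$:
\begin{itemize}
\item inside $(0^x1^x)^m$, a $0$ sees $1$'s (or the cyclic wrap onto the tail) at distance $x$ on each side;
\item a $1$ sees $0$'s on both sides;
\item the tail $0^r$ sees a $1$ behind and, cyclically ahead, the beginning of the next period;
\item each $2$ sees $0$ behind and $1$ ahead;
\item the $1^r$ sees $0$'s.
\end{itemize}
Each of these is a finite index computation modulo $x+y$. Case (2) is handled the same way, with the tail $2^r$ seeing a $0$ behind and a $1$ ahead. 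Case (3) is trivial because $y\equiv x \pmod{2x}$, so both options at $n$ equal $g(n-x)$.

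Finally, for least periods: in case (3), $2x$ is minimal since the word consists of just two blocks $0^x1^x$. In cases (1) and (2), a proper period would have to divide $x+y$ and preserve the unique run of $2$'s. That run has length $x-r$ or $r$, which is strictly less than $x+y$ and occurs once per period, so no smaller period exists.

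The main obstacle is the wrap-around bookkeeping at the block boundaries, especially in case (1). There, positions in $0^r$ and $2^{x-r}$ look $x$ steps ahead across the period boundary, and I must confirm that they land on $0$'s or $1$'s of the next copy's $0^x$ and $1^x$ blocks with the right offsets. I will carry this out by writing each position as $qx+j$ and computing $j\pm x$ explicitly. The degenerate case $r=0$ will be checked separately.
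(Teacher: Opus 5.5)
The paper gives no proof of this theorem: it is quoted from \cite{Ward_1} and used as an input, so your argument is an independent, self-contained route, and it is sound. The candidate word is purely periodic by construction. The range $x\le n<y$ is the alternating prefix, plus the $r$ positions after $qx$, which you correctly flag. For $n\ge y$, the identity $n-y\equiv n+x \pmod{x+y}$ turns \eqref{eq:g-rec} into the cyclic condition $G[i]=\mex\{G[i-x],G[i+x]\}$ at every position of $G$. Uniqueness of the solution of \eqref{eq:g-rec} then identifies the word with $g$. Your minimality argument also works once phrased precisely: any period can be replaced by its gcd with $x+y$, and the positions carrying $2$ form a single nonempty proper cyclic interval modulo $x+y$, which no nontrivial rotation fixes. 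What your approach buys over the citation is that the whole classification becomes a finite, block-by-block check; the citation buys the paper brevity.

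Correct the neighbor bookkeeping before writing it out, because one of your stated local facts is false and, taken literally, would break the check. In case (1) the block $0^r$ sits at positions $2mx+j$ with $0\le j<r$. The position $x$ steps ahead is $(2m+1)x+j$, which lies in the closing $1^r$ block of the \emph{same} period, not at the start of the next one. So that block sees $1$'s on both sides and $\mex\{1,1\}=0$. With a $1$ behind and a $0$ ahead, as you wrote, the mex would be $2$. Two smaller inaccuracies are harmless. First, the $2$'s in cases (1) and (2) see a $1$ behind and a $0$ ahead, the reverse of what you state. Second, the last $1^x$ block of the alternating prefix does not see only $0$'s: some of its positions have a $2$ exactly $x$ steps ahead, and there $\mex\{0,2\}=1$ still holds.
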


\section{Technical lemmas and propositions} \label{Lemmas}

Before the $y$-move becomes available, both $g$ and $h$ are especially
simple.

\begin{lemma}
\label{lem:pre-y}
Let
\[
        n=jx+t,\qquad 0\le t<x,
\]
and suppose $n<y$.

Then
\begin{equation}
        g(n)=j\bmod2.
\label{eq:g-before-y}
\end{equation}

For the pass-available game,
\begin{equation}
h(n)=
\begin{cases}
0,&j=0,\\
2,&j=1,\\
1,&j\ge2\text{ even},\\
0,&j\ge3\text{ odd}.
\end{cases}
\label{eq:h-before-y}
\end{equation}
\end{lemma}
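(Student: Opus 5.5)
Both formulas follow by strong induction on $n$ over the range $0\le n<y$. In this range the $y$-move is never available, so \eqref{eq:g-rec} reduces to $g(n)=\mex\{g(n-x)\}$ for $n\ge x$, with $g(n)=0$ for $n<x$. Likewise \eqref{eq:h-rec} reduces to
\[
h(n)=\mex\{h(n-x),\,g(n)\}
\]
for $x\le n<y$, together with the terminal values \eqref{eq:h-terminal}. The point of writing $n=jx+t$ is that subtracting $x$ sends $n$ to $(j-1)x+t$: it changes $j$ by one and leaves $t$ fixed. Hence both recursions are really recursions in $j$ alone, and $t$ plays no role.

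\emph{Step 1: the ordinary values $g$.} We prove \eqref{eq:g-before-y} first, since it feeds into $h$. For $j=0$ we have $g(n)=0$ because $n<x$ is terminal. For $j\ge1$, the position $n-x=(j-1)x+t$ is smaller than $y$, so the induction hypothesis gives $g(n-x)=(j-1)\bmod 2$. Then $g(n)=\mex\{(j-1)\bmod2\}$, which equals $1$ if $j-1$ is even and $0$ if $j-1$ is odd. In both cases this is $j\bmod 2$.

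\emph{Step 2: the pass-available values $h$.} We run the induction on $j$ through the four cases of \eqref{eq:h-before-y}.
\begin{itemize}
\item If $j=0$, then $h(n)=0$ by \eqref{eq:h-terminal}.
\item If $j=1$, then $h(n-x)=0$ and $g(n)=1$, so $h(n)=\mex\{0,1\}=2$.
\item If $j=2$, then $h(n-x)=2$ and $g(n)=0$, so $h(n)=\mex\{2,0\}=1$.
\item If $j\ge3$ is odd, then $j-1\ge2$ is even, so $h(n-x)=1$; also $g(n)=1$. Hence $h(n)=\mex\{1\}=0$.
\item If $j\ge4$ is even, then $j-1\ge3$ is odd, so $h(n-x)=0$; also $g(n)=0$. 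Hence $h(n)=\mex\{0\}=1$.
\end{itemize}
This covers every case, so the four-case formula reproduces itself and the induction closes.

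There is no serious obstacle; the argument is a routine induction. The only points needing care are the following.
\begin{itemize}
\item The hypothesis $n<y$ must be carried into every option $n-x$, so the $y$-option is never present. This holds because $n-x<n<y$.
\item The pass option $g(n)$ exists exactly when $n\ge x$, that is, when $j\ge1$.
\item The cases $j=1$ and $j=2$ must be treated separately from the alternating pattern for $j\ge3$. They are what produce the transient value $2$ and the shift from the parity of $g$.
\end{itemize}
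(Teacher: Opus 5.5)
Your proposal is correct and follows the paper's proof essentially step for step. Both drop the unavailable $y$-option to get $g(n)=\mex\{g(n-x)\}$ and $h(n)=\mex\{h(n-x),g(n)\}$, read off the alternating $0^x1^x$ pattern for $g$, and then compute $h$ block by block: the special blocks $j=1,2$ first, then the alternation $\mex\{1\}=0$, $\mex\{0\}=1$ for $j\ge3$, with your induction on $j$ (at fixed $t$) simply making explicit what the paper states informally.
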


\begin{proof}
For $n<y$, subtraction by $y$ is unavailable.  Therefore
\[
        g(n)=\mex\{g(n-x)\}
\]
whenever $n\ge x$.  Starting with $g(n)=0$ for $0\le n<x$, the
ordinary values alternate in blocks:
\[
        0^x1^x0^x1^x\cdots.
\]
This proves \eqref{eq:g-before-y}.

For $h$, while $n<y$ we have
\[
        h(n)=\mex\{h(n-x),g(n)\}.
\]
On the first block, $h=0^x$.  On the second block,
\[
        \mex\{0,1\}=2,
\]
so the block is $2^x$.  On the third block,
\[
        \mex\{2,0\}=1,
\]
so the block is $1^x$.

Thereafter, if $j\ge3$ is odd, the preceding block has value $1$
and $g(n)=1$, so
\[
        h(n)=\mex\{1\}=0.
\]
If $j\ge4$ is even, the preceding block has value $0$ and $g(n)=0$,
so
\[
        h(n)=\mex\{0\}=1.
\]
This proves \eqref{eq:h-before-y}.
\end{proof}

We now determine the complete nonperiodic prefix of the pass-available
sequence.

\subsection{The case $q=1$}

Here
\[
        y=x+r,\qquad 0<r<x.
\]

\begin{proposition}
\label{prop:q1}
If $q=1$, then
\begin{equation}
        h[0,3x+r-1]=0^x2^x1^x3^r.
\label{eq:prefix-q1}
\end{equation}
\end{proposition}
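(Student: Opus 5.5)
The plan is to verify \eqref{eq:prefix-q1} block by block by direct evaluation of the recursion \eqref{eq:h-rec}. Each value $h(n)$ will be computed from the three already-determined inputs $h(n-x)$, $h(n-y)$ (when $n\ge y$) and $g(n)$. The ordinary values come from Theorem~\ref{thm:ordinary}: since $q=1=2\cdot0+1$ is odd and $r>0$, part (2) with $m=0$ gives the period $G=0^x1^x2^r$ of length $2x+r$. Concretely, $g=0$ on $[0,x)$, $g=1$ on $[x,2x)$, $g=2$ on $[2x,2x+r)$, and $g(n)=g(n-(2x+r))$ beyond, so $g=0$ on $[2x+r,3x+r)$. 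The work is then to split $[0,3x+r)$ into subintervals on which each of the three inputs is constant, and check one $\mex$ per subinterval.

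The first step is the region $n<y=x+r$, which Lemma~\ref{lem:pre-y} already handles: $h=0$ on $[0,x)$ and $h=2$ on $[x,x+r)$. For $n\in[x+r,2x)$ the $y$-move becomes available. Here $n-y\in[0,x-r)$ gives $h(n-y)=0$, and also $h(n-x)=0$ and $g(n)=1$. Hence $h(n)=\mex\{0,0,1\}=2$, which completes the block $2^x$.

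For the third block $[2x,3x)$ I will split at $2x+r$:
\begin{itemize}
\item On $[2x,2x+r)$: $h(n-x)=2$, $n-y\in[x-r,x)$ gives $h(n-y)=0$, and $g(n)=2$, so $h(n)=\mex\{2,0,2\}=1$.
\item On $[2x+r,3x)$: $h(n-x)=2$, $n-y\in[x,2x-r)$ gives $h(n-y)=2$, and $g(n)=0$, so $h(n)=\mex\{2,2,0\}=1$.
\end{itemize}
This gives the block $1^x$.

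Finally, on $[3x,3x+r)$ we have $h(n-x)=1$ (since $n-x\in[2x,2x+r)$), $h(n-y)=2$ (since $n-y\in[2x-r,2x)$), and $g(n)=0$ (since $n-(2x+r)\in[x-r,x)$). All three options $0,1,2$ are present, so $h(n)=3$, giving the final block $3^r$.

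The only real obstacle is bookkeeping. One must track correctly where $n-y$ and $n-(2x+r)$ land relative to the block boundaries, using $0<r<x$, especially at the split point $2x+r$ inside the third block. The argument would fail if the ordinary period of Theorem~\ref{thm:ordinary} were misread, so that step deserves care. Otherwise the proof is a finite sequence of $\mex$ evaluations.
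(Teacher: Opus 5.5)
Your proposal is correct and is essentially the paper's own proof. Both evaluate \eqref{eq:h-rec} block by block against the ordinary word $0^x1^x2^r$. Both use Lemma~\ref{lem:pre-y} on $[x,x+r)$ and a direct $\mex\{0,0,1\}=2$ on $[x+r,2x)$, split the third block at $2x+r$, and finish with $\mex\{1,2,0\}=3$ on $[3x,3x+r)$. Your interval bookkeeping checks out at each step.
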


\begin{proof}
The first block is $0^x$ by \eqref{eq:h-terminal}.

Consider the second block, $n=x+t$, $0\le t<x$.

If $t<r$, then $n<y$, so Lemma~\ref{lem:pre-y} gives $h(n)=2$.

If $t\ge r$, the $y$-move is legal and
\[
        h(n-x)=0,\qquad h(n-y)=0,\qquad g(n)=1.
\]
Thus
\[
        h(n)=\mex\{0,0,1\}=2.
\]
Hence the entire second block is $2^x$.

Now let $n=2x+t$.  Then $h(n-x)=2$.

For $t<r$,
\[
        n-y=x+t-r<x,
\]
so $h(n-y)=0$, while Theorem~\ref{thm:ordinary} gives $g(n)=2$.
Therefore
\[
        h(n)=\mex\{2,0,2\}=1.
\]

For $t\ge r$, the position $n-y$ lies in the $2^x$ block, so
$h(n-y)=2$.  The ordinary period has wrapped to its initial $0^x$
block, hence $g(n)=0$.  Thus
\[
        h(n)=\mex\{2,2,0\}=1.
\]
The third block is therefore $1^x$.

Finally let
\[
        n=3x+t,\qquad0\le t<r.
\]
Then
\[
        h(n-x)=1,
\]
and
\[
        n-y=2x+t-r\in[x,2x),
\]
so $h(n-y)=2$.

The ordinary period is $0^x1^x2^r$ of length $2x+r$.
Since
\[
        n-(2x+r)=x+t-r\in[0,x),
\]
we have $g(n)=0$.  Hence
\[
        h(n)=\mex\{1,2,0\}=3.
\]
This proves \eqref{eq:prefix-q1}.
\end{proof}

\subsection{Even quotient}

Let
\[
        q=2m,\qquad m\ge1.
\]

\begin{proposition}[Even quotient]
\label{prop:even-prefix}
If $q=2m$, then:

\begin{enumerate}
\item for $r=0$,
\begin{equation}
h[0,(q+2)x-1]
=
0^x2^x1^x(0^x1^x)^{m-1}3^x;
\label{eq:prefix-even-r0}
\end{equation}

\item for $0<r<x$,
\begin{equation}
h[0,(q+2)x-1]
=
0^x2^x1^x(0^x1^x)^{m-1}2^r3^{x-r}.
\label{eq:prefix-even-rpos}
\end{equation}
\end{enumerate}
\end{proposition}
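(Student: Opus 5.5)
The plan is to compute $h$ block by block, the way Proposition~\ref{prop:q1} does. Write $n=jx+t$ with $0\le t<x$ and use \eqref{eq:h-rec}. Since $y=2mx+r$, every $n<y$ has $j\le 2m$, or $j=2m$ with $t<r$. On that range Lemma~\ref{lem:pre-y} applies directly. It gives the blocks $0^x2^x1^x$ for $j=0,1,2$, then $0^x$ for odd $j\ge3$ and $1^x$ for even $j\ge4$, up through block $j=2m$. So blocks $j=0,\dots,2m$ read $0^x2^x1^x(0^x1^x)^{m-1}$. The last of these, block $2m$, is $1^x$ when $m\ge2$, and it is the $1^x$ of the third block when $m=1$. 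The only gap is the part of block $2m$ with $t\ge r$, where the $y$-move is already legal; it must be checked separately.

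\emph{Block $2m$, $t\ge r$ (only when $r>0$; for $r=0$ the $y$-move first appears at $n=y=(q+1)x$).} Here $n-y=t-r\in[0,x)$, so $h(n-y)=0$. By Theorem~\ref{thm:ordinary}, part (1), with period $(0^x1^x)^m0^r2^{x-r}1^r$, the position $n=2mx+t$ with $t\ge r$ lies in the $2^{x-r}$ segment, so $g(n)=2$. The value $h(n-x)$ comes from block $2m-1$, which is $0$ when $m\ge2$ and $2$ when $m=1$. For $m\ge2$ this gives $\mex\{0,0,2\}=1$. For $m=1$ it gives $\mex\{2,0,2\}=1$. Either way the block remains $1^x$, consistent with Lemma~\ref{lem:pre-y}.

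\emph{Block $2m+1$, $n=(2m+1)x+t$.} Now $h(n-x)=1$. For the ordinary value $g(n)$, we use the fact that one period occupies $[0,y+x)=[0,(2m+1)x+r)$.

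\emph{Case $r=0$.} Block $2m+1$ is the $2^x$ segment of the period, so $g(n)=2$. Also $n-y=t<x$, so $h(n-y)=0$. Hence $h(n)=\mex\{1,0,2\}=3$, which gives $3^x$ and proves \eqref{eq:prefix-even-r0}.

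\emph{Case $r>0$, $t<r$.} The position $n$ sits in the final $1^r$ segment of the period, so $g(n)=1$. We have $n-y=x+t-r\in[x-r,x+t-r]$. Since $t<r$, this lies in $[0,x)$, so $h(n-y)=0$. Hence $h(n)=\mex\{1,0,1\}=2$, which gives $2^r$.

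\emph{Case $r>0$, $t\ge r$.} The period has wrapped, since $n-(2m+1)x-r=t-r\in[0,x)$, so $g(n)=0$. We have $n-y=x+(t-r)\in[x,2x)$, so $h(n-y)=2$. Hence $h(n)=\mex\{1,2,0\}=3$, which gives $3^{x-r}$ and proves \eqref{eq:prefix-even-rpos}.

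The main obstacle is bookkeeping, not ideas. One must locate $n$ correctly within the ordinary period, which means handling the wrap-around at length $x+y$ and the split of the $0^r2^{x-r}1^r$ tail. One must also treat the edge case $m=1$ separately: there the block $j=2m-1$ is the $2^x$ block rather than a $0^x$ block, so the block-$2m$ computation for $t\ge r$ has to be redone with $h(n-x)=2$, as done above.
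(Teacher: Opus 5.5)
Your treatment of $r>0$ is correct and is essentially the paper's argument. You split blocks $q$ and $q+1$ at $t=r$ and read off $h(n-x)$, $h(n-y)$, $g(n)$, handling the $m=1$ edge case. The $r=0$ case, however, rests on a false claim. You say that for $r=0$ the $y$-move first appears at $n=(q+1)x$, but $y=qx+r=2mx$ is the \emph{first} index of block $2m$. So for $r=0$ the whole block $2m$ has $n\ge y$, and Lemma~\ref{lem:pre-y}, whose hypothesis is $n<y$, does not determine it. That block must be computed from the three-option recurrence, which your argument never does for $r=0$. Your opening sentence should likewise read $j\le 2m-1$, not $j\le 2m$.

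The same misplacement of $y$ corrupts your block-$(2m+1)$ computation for $r=0$. There $n-y=(2m+1)x+t-2mx=x+t\in[x,2x)$, so $h(n-y)=2$, not $0$. Also, the ordinary period $(0^x1^x)^m2^x$ has length $(2m+1)x$, so its $2^x$ segment is block $2m$; block $2m+1$ begins the next period and $g(n)=0$, not $2$. You arrive at $\mex\{1,0,2\}=3$ only because the two errors exchange values inside the option set.

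The repair is not to single out $r=0$ at all. Treat it as the $t\ge r$ subcase throughout, exactly as the paper does; the $t<r$ ranges are then empty. In block $2m$ this gives $h(n-y)=h(t)=0$, $g(n)=2$ and $h(n-x)\in\{0,2\}$, hence $h(n)=1$. In block $2m+1$ it gives $(h(n-x),h(n-y),g(n))=(1,2,0)$, hence $h(n)=3$. With that correction your proof is complete.
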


\begin{proof}
Lemma~\ref{lem:pre-y} determines all complete blocks strictly before
the block containing $y$.  It remains to complete the boundary blocks.

First let
\[
        n=qx+t,\qquad0\le t<x.
\]

If $t<r$, then $n<y$.  Since $q$ is even,
Lemma~\ref{lem:pre-y} gives $h(n)=1$.

Now suppose $t\ge r$.  Then
\[
        n-y=t-r<x,
\]
so $h(n-y)=0$.
By \eqref{eq:ordinary-even},
\[
        g(n)=2.
\]

For $q=2$, the $x$-predecessor lies in the exceptional block $2^x$,
so $h(n-x)=2$, and
\[
        \mex\{2,0,2\}=1.
\]
For $q\ge4$, the $x$-predecessor lies in an odd block of index at least
$3$, so $h(n-x)=0$, and
\[
        \mex\{0,0,2\}=1.
\]
Thus in every even-$q$ case,
\begin{equation}
        h[qx,(q+1)x-1]=1^x.
\label{eq:q-even-completed}
\end{equation}

Next let
\[
        n=(q+1)x+t.
\]
Then $h(n-x)=1$.

If $t<r$, then
\[
        n-y=x+t-r\in[0,x),
\]
so $h(n-y)=0$.  From \eqref{eq:ordinary-even}, $g(n)=1$.
Therefore
\[
        h(n)=\mex\{1,0,1\}=2.
\]

If $t\ge r$, then
\[
        n-y=x+t-r\in[x,2x),
\]
so $h(n-y)=2$.
At $t=r$ the ordinary sequence has completed one period of length
$x+y=(q+1)x+r$, and therefore $g(n)=0$ for $t\ge r$.
Thus
\[
        h(n)=\mex\{1,2,0\}=3.
\]

Consequently the final boundary block is
\[
        2^r3^{x-r}.
\]
For $r=0$ this is $3^x$, giving
\eqref{eq:prefix-even-r0}; for $r>0$ it gives
\eqref{eq:prefix-even-rpos}.
\end{proof}

\subsection{Odd quotient}

Let
\[
        q=2m+1.
\]

\begin{proposition}[Odd quotient, $r=0$]
\label{prop:odd-r0-prefix}
If $q=2m+1\ge3$ and $r=0$, then
\begin{equation}
h[0,(q+1)x-1]
=
0^x2^x1^x(0^x1^x)^{m-1}2^x.
\label{eq:prefix-odd-r0}
\end{equation}
\end{proposition}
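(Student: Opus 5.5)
The plan is to follow the proof of Proposition~\ref{prop:even-prefix}: Lemma~\ref{lem:pre-y} handles every complete block lying strictly before $y$, and only the single block beginning at $y=qx$ needs a direct computation with \eqref{eq:h-rec}.

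\emph{Blocks before $y$.} Since $r=0$, we have $y=qx$. Hence every $n<qx$ satisfies $n<y$, and Lemma~\ref{lem:pre-y} applies to the blocks $j=0,1,\dots,q-1$. These blocks are $0^x$, $2^x$, $1^x$, followed by alternating $0^x$ (odd $j\ge3$) and $1^x$ (even $j\ge4$). The last such block has index $j=q-1=2m$, which is even. So after the initial $0^x2^x1^x$ (indices $0,1,2$), the indices $3,\dots,2m$ form exactly $m-1$ pairs $0^x1^x$. This gives
\[
h[0,qx-1]=0^x2^x1^x(0^x1^x)^{m-1}.
\]
When $m=1$ the parenthesized part is empty and the prefix is just $0^x2^x1^x$.

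\emph{The boundary block.} Let $n=qx+t$ with $0\le t<x$, so the $y$-move is legal. We evaluate the three options in \eqref{eq:h-rec}.
\begin{itemize}
\item Since $n-y=t<x$, we have $h(n-y)=0$ by \eqref{eq:h-terminal}.
\item The $x$-predecessor $n-x$ lies in block $q-1=2m$. If $q=3$, this is the exceptional block $1^x$; if $q\ge5$, it is an even block of index at least $4$, which is again $1^x$. In either case $h(n-x)=1$.
\item By part (3) of Theorem~\ref{thm:ordinary}, $g=(0^x1^x)^\infty$. Block $q$ is odd, so $g(n)=1$.
\end{itemize}
Therefore
\[
h(n)=\mex\{1,0,1\}=2,
\]
and block $q$ equals $2^x$, completing \eqref{eq:prefix-odd-r0}.

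There is no real obstacle here. The only points needing care are the index bookkeeping, namely that the number of $0^x1^x$ pairs is $m-1$, and checking that the $q=3$ case, where the predecessor block is the exceptional $1^x$, still gives $h(n-x)=1$. Both are checked above.
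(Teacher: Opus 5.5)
Your proof is correct and takes essentially the same route as the paper. Lemma~\ref{lem:pre-y} supplies the blocks before $y=qx$, and on the block $n=qx+t$ you read off $h(n-x)=1$, $h(n-y)=h(t)=0$ and $g(n)=1$ from $(0^x1^x)^\infty$, so $h(n)=\mex\{1,0,1\}=2$; your count of the $m-1$ pairs $0^x1^x$ and your separate check of $q=3$ just spell out bookkeeping the paper leaves implicit.
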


\begin{proof}
Lemma~\ref{lem:pre-y} determines the sequence up to $n=y-1$.
Let
\[
        n=qx+t,\qquad0\le t<x.
\]
The preceding block has value $1$, so
\[
        h(n-x)=1.
\]
Since $n-y=t$,
\[
        h(n-y)=0.
\]
By \eqref{eq:ordinary-odd-multiple}, the $q$th ordinary block,
with $q$ odd, has value $1$.  Hence
\[
        h(n)=\mex\{1,0,1\}=2.
\]
This proves \eqref{eq:prefix-odd-r0}.
\end{proof}

\begin{proposition}[Odd quotient, $r>0$]
\label{prop:odd-rpos-prefix}
If $q=2m+1\ge3$ and $0<r<x$, then
\begin{equation}
\begin{split}
h[0,(q+2)x+r-1]
={}&
0^x2^x1^x
(0^x1^x)^{m-1}\\
&{}\cdot
0^r2^{x-r}1^x3^r.
\end{split}
\label{eq:prefix-odd-rpos}
\end{equation}
\end{proposition}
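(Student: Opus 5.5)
The plan is to compute $h$ block by block, writing $n=jx+t$ with $0\le t<x$, exactly as in the proofs of Propositions~\ref{prop:q1} and~\ref{prop:even-prefix}. Lemma~\ref{lem:pre-y} already fixes every position with $n<y=qx+r$. Since $q=2m+1\ge3$, the blocks $j=0,1,\dots,q-1$ are $0^x2^x1^x(0^x1^x)^{m-1}$, and the first $r$ entries of block $q$ are $0^r$ (case $j\ge3$ odd). What remains is to finish block $q$ for $t\ge r$, all of block $q+1$, and the first $r$ entries of block $q+2$. At each of these positions I will evaluate the three options $h(n-x)$, $h(n-y)$, $g(n)$ in \eqref{eq:h-rec}.

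For $g$, I read off \eqref{eq:ordinary-odd-r}. The period $(0^x1^x)^{m+1}2^r$ has length $x+y=(q+1)x+r$. The word $(0^x1^x)^{m+1}$ occupies $[0,(q+1)x)$, so $g$ equals $j\bmod 2$ on blocks $0,\dots,q$. The $2^r$ segment sits at $[(q+1)x,(q+1)x+r)$. From $(q+1)x+r$ onward the period restarts, so $g(n)=g(n-(x+y))$.

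The case computations then run as follows.
\begin{itemize}
\item Block $q$, $t\ge r$: here $h(n-x)=1$, since block $q-1$ is even and at least $2$ (this is $1$ whether $q-1=2$ or larger). Next, $n-y=t-r<x$ gives $h(n-y)=0$, and $g(n)=1$ because $q$ is odd. So $h(n)=\mex\{1,0,1\}=2$, which produces $0^r2^{x-r}$.
\item Block $q+1$, $t<r$: $h(n-x)=0$ from the $0^r$ part. Also $n-y=x+t-r<x$, so $h(n-y)=0$, and $g(n)=2$ from the $2^r$ segment. So $h(n)=1$.
\item Block $q+1$, $t\ge r$: $h(n-x)=2$, and $n-y=x+t-r\in[x,2x)$ gives $h(n-y)=2$. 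The ordinary period has wrapped, so $g(n)=g(t-r)=0$. Again $h(n)=1$, so the whole block is $1^x$.
\item Block $q+2$, $t<r$: $h(n-x)=1$, and $n-y=2x+t-r\in[x,2x)$ gives $h(n-y)=2$. Finally $g(n)=g(x+t-r)=0$, since $x+t-r\in[0,x)$. So $h(n)=\mex\{1,2,0\}=3$, giving $3^r$.
\end{itemize}

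The main obstacle is bookkeeping rather than ideas. I have to locate $n-y$ correctly in either the terminal block $0^x$ or the exceptional block $2^x$; the hypothesis $q\ge3$ guarantees the latter is block $1$ and is not overwritten. I also have to track exactly where the ordinary period wraps, at $(q+1)x+r$, which is what switches $g$ from $2$ to $0$ inside block $q+1$. I would double-check these two index ranges carefully, especially the boundary $t=r$.
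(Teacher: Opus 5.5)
Your proposal is correct and follows the paper's proof essentially step for step. The paper also invokes Lemma~\ref{lem:pre-y} for every $n<y$. It then evaluates $(h(n-x),h(n-y),g(n))$ on block $q$ for $t\ge r$, on block $q+1$ split at $t=r$ (where the ordinary period wraps), and on the first $r$ entries of block $q+2$, obtaining exactly your values $2$, $1$, $1$, $3$.
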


\begin{proof}
Again Lemma~\ref{lem:pre-y} determines the full blocks before $y$.

First let
\[
        n=qx+t.
\]
For $t<r$ we have $n<y$.  Since $q\ge3$ is odd,
Lemma~\ref{lem:pre-y} yields $h(n)=0$.

For $t\ge r$,
\[
        h(n-x)=1,\qquad h(n-y)=0.
\]
From \eqref{eq:ordinary-odd-r},
\[
        g(n)=1.
\]
Therefore
\[
        h(n)=\mex\{1,0,1\}=2.
\]
Hence
\[
        h[qx,(q+1)x-1]=0^r2^{x-r}.
\]

Now put
\[
        n=(q+1)x+t.
\]
If $t<r$, then
\[
        h(n-x)=0,\qquad h(n-y)=0,\qquad g(n)=2,
\]
so
\[
        h(n)=\mex\{0,0,2\}=1.
\]
If $t\ge r$, then
\[
        h(n-x)=2,\qquad h(n-y)=2.
\]
The ordinary word has wrapped at $t=r$, so $g(n)=0$.
Thus
\[
        h(n)=\mex\{2,2,0\}=1.
\]
Therefore
\[
        h[(q+1)x,(q+2)x-1]=1^x.
\]

Finally take
\[
        n=(q+2)x+t,\qquad0\le t<r.
\]
Then $h(n-x)=1$ and
\[
        n-y=2x+t-r\in[x,2x),
\]
so $h(n-y)=2$.

The ordinary period length is
\[
        P=x+y=(q+1)x+r.
\]
Since
\[
        n-P=x+t-r\in[0,x),
\]
we have $g(n)=0$.  Hence
\[
        h(n)=\mex\{1,2,0\}=3.
\]
This gives \eqref{eq:prefix-odd-rpos}.
\end{proof}

The preceding propositions give the following natural index.\\

\indent
Define
\begin{equation}
N=N(x,y):=
\begin{cases}
(q+1)x,
    & r=0,\ q\text{ odd},\\[2mm]
(q+2)x,
    & r=0,\ q\text{ even},\\[2mm]
(q+2)x+r,
    & r>0,\ q\text{ odd},\\[2mm]
(q+2)x,
    & r>0,\ q\text{ even}.
\end{cases}
\label{eq:N}
\end{equation}

\noindent
This is equivalent to 
\[
N=
\begin{cases}
y+x,&r=0,\ q\text{ odd},\\
y+2x,&r=0,\ q\text{ even},\\
y+2x,&r>0,\ q\text{ odd},\\
y+2x-r,&r>0,\ q\text{ even}.
\end{cases}
\]

We next prove that this is exactly where the shifted ordinary tail begins.

\begin{lemma}
\label{lem:mex-stability}
Let
\[
        d=\mex\{a,b\}.
\]
If $c\ne d$, then
\[
        \mex\{a,b,c\}=d.
\]
\end{lemma}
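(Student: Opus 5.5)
The plan is to argue directly from the definition of $\mex$. Put $d=\mex\{a,b\}$. By definition, $d\notin\{a,b\}$, and every $t<d$ lies in $\{a,b\}$. I will check the same two properties for the three-element set $\{a,b,c\}$ and conclude that its $\mex$ is again $d$.

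First I show that $d$ is missing from $\{a,b,c\}$. We already know $d\ne a$ and $d\ne b$, and the hypothesis gives $c\ne d$. So $d\notin\{a,b,c\}$, which yields $\mex\{a,b,c\}\le d$.

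Next I show that every $t<d$ lies in $\{a,b,c\}$. Each such $t$ is in $\{a,b\}\subseteq\{a,b,c\}$, so $\mex\{a,b,c\}\ge d$. Combining the two inequalities gives $\mex\{a,b,c\}=d$.

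I do not expect any real obstacle here. The only point needing care is that enlarging a set can never lower its $\mex$ except by filling in the old $\mex$ value itself, and that value is exactly what the hypothesis $c\ne d$ excludes. The argument uses nothing else: it does not matter whether $a=b$, or whether $c$ equals $a$ or $b$. This is the form in which the lemma will be applied to the recursion \eqref{eq:h-rec}, with $a=h(n-x)$, $b=h(n-y)$ and $c=g(n)$.
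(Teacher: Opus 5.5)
Your proof is correct and follows the same route as the paper's proof: every integer below $d$ already lies in $\{a,b\}$, $d$ does not, and adjoining $c\neq d$ preserves both facts. Two small asides on your closing paragraph, neither affecting the proof. First, enlarging a set can only \emph{raise} the mex, never lower it. Second, in the bridge lemma the paper applies this lemma with $a=h(n-x)=g(n+y-x)$, $b=g(n)$ and $c=h(n-y)$ (the transient $y$-predecessor), not with $c=g(n)$.
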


\begin{proof}
Every nonnegative integer less than $d$ already occurs among $a,b$,
while $d$ does not.  Adding an element $c\ne d$ preserves both facts.
\end{proof}

\begin{lemma}[Initial tail block]
\label{lem:first-tail-block}
For
\[
        N\le n<N+x,
\]
we have 
\[
        h(n)=g(n+y).
\]
\end{lemma}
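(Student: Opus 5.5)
The plan is a direct verification. For each $n$ with $N\le n<N+x$ we have $n\ge y$, so \eqref{eq:h-rec} gives $h(n)=\mex\{h(n-x),h(n-y),g(n)\}$. Both $n-x$ and $n-y$ lie below $N$, so their values can be read off the prefixes already computed in Propositions~\ref{prop:q1}, \ref{prop:even-prefix}, \ref{prop:odd-r0-prefix} and \ref{prop:odd-rpos-prefix}. The values $g(n)$ and $g(n+y)$ come from Theorem~\ref{thm:ordinary}, by reducing modulo the period $P=x+y$ (or $2x$ when $q$ is odd and $r=0$). I split according to the four cases in the definition \eqref{eq:N} of $N$, write $n=N+t$ with $0\le t<x$, and compare the resulting mex with $g(n+y)$.

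\emph{Case $r=0$, $q$ odd.} Here $n=(q+1)x+t$. The predecessor $n-x$ lies in the final $2^x$ block, so $h(n-x)=2$. Since $n-y=x+t$, we get $h(n-y)=2$. The ordinary block index $q+1$ is even, so $g(n)=0$. Hence $h(n)=\mex\{2,2,0\}=1$. On the other side, $n+y=(2q+1)x+t$ has odd block index, so $g(n+y)=1$ by \eqref{eq:ordinary-odd-multiple}.

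\emph{Case $r=0$, $q$ even.} Here $n=(q+2)x+t$. We have $h(n-x)=3$ and $h(n-y)=h(2x+t)=1$. Also $g(n)=g(x+t)=1$, since $P=(q+1)x$. Hence $h(n)=0$. Meanwhile $n+y\equiv t\pmod P$, so $g(n+y)=0$.

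\emph{Case $r>0$, $q$ odd.} Here $n=(q+2)x+r+t$. The predecessor $n-x$ lies in the $1^x3^r$ tail, so $h(n-x)\in\{1,3\}$. We have $h(n-y)=h(2x+t)=1$ and $g(n)=g(x+t)=1$. Hence $h(n)=0$ in either subcase. Meanwhile $n+y=2P+t$, so $g(n+y)=0$.

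\emph{Case $r>0$, $q$ even.} Here $n=(q+2)x+t$, and I split at $t=r$.
\begin{itemize}
\item For $t<r$: $h(n-x)=2$ (from the $2^r$ part), $h(n-y)=h(2x-r+t)=2$, and $g(n)=g(x-r+t)=0$. Hence $h(n)=1$.
\item For $t\ge r$: $h(n-x)=3$, $h(n-y)=1$ (third block), and $g(n)=1$. Hence $h(n)=0$.
\end{itemize}
On the other side, $n+y=2P+(t-r)$. For $t<r$ this position falls in the closing $1^r$ of $G$ in \eqref{eq:ordinary-even}, giving $g(n+y)=1$. For $t\ge r$ it falls in the initial $0^x$ block, giving $g(n+y)=0$. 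So the two sides agree.

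The computations themselves are routine. The step I expect to need the most care is the bookkeeping of which prefix block contains $n-x$ and $n-y$ in the small cases $q=1$ and $q=2$. In those cases the exceptional blocks $2^x$ and $1^x$ from Lemma~\ref{lem:pre-y} sit immediately next to the boundary blocks, so I will check those cases separately against Propositions~\ref{prop:q1} and \ref{prop:even-prefix}. A second point of care is confirming that the residues $x+t$ and $x-r+t$ land in the stated blocks of the ordinary word $G$.
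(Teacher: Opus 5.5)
Your proposal is correct and follows essentially the same route as the paper. Both arguments split by the parity of $q$ and whether $r=0$, with the further splits at $t=r$ and at $t=x-r$, the latter implicit in your remark that $h(n-x)\in\{1,3\}$. Both then read the triple $(h(n-x),h(n-y),g(n))$ off the computed prefixes and the ordinary word, and your triples $(2,2,0)$, $(3,1,1)$ and $(1,1,1)$ match the paper's table. You go slightly further than the paper by reducing $n+y$ modulo the ordinary period to check the value of $g(n+y)$; the paper only asserts that this value agrees with the mex.
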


\begin{proof}
Write $n=N+t$, $0\le t<x$.  Reading the three values in
\eqref{eq:h-rec} from the already-proved prefix and from
Theorem~\ref{thm:ordinary} gives the following complete table.

\begin{center}
\begin{tabular}{llll}
\toprule
Case & range of $t$
& $(h(n-x),h(n-y),g(n))$
& $h(n)=g(n+y)$\\
\midrule
$q$ odd, $r>0$
  & $0\le t<x-r$ & $(1,1,1)$ & $0$\\
$q$ odd, $r>0$
  & $x-r\le t<x$ & $(3,1,1)$ & $0$\\
$q$ even, $r>0$
  & $0\le t<r$ & $(2,2,0)$ & $1$\\
$q$ even, $r>0$
  & $r\le t<x$ & $(3,1,1)$ & $0$\\
$q$ even, $r=0$
  & $0\le t<x$ & $(3,1,1)$ & $0$\\
$q$ odd, $r=0$
  & $0\le t<x$ & $(2,2,0)$ & $1$\\
\bottomrule
\end{tabular}
\end{center}

For example,
\[
        \mex\{1,1,1\}=0,\qquad
        \mex\{3,1,1\}=0,\qquad
        \mex\{2,2,0\}=1.
\]
The value in the final column is independently the corresponding value
of the explicit ordinary word at $n+y$.  Thus every possible case gives
$h(n)=g(n+y)$.
\end{proof}

For $N+x\le n<N+y$, the $x$-predecessor is already in the desired
tail but the $y$-predecessor still lies in the transient.  This is the
only genuine obstruction to a direct induction.

Set
\[
        z=n-y.
\]
Then
\[
        N+x-y\le z<N.
\]

The key fact is that the transient $y$-predecessor never equals the
desired mex.

\begin{lemma}
\label{lem:avoidance}
For every
\[
        N+x-y\le z<N,
\]
we have 
\begin{equation}
        h(z)\ne g(z+2y).
\label{eq:avoidance}
\end{equation}
\end{lemma}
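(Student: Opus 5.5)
This is a finite check, and I will organize it by the same four-way case split that defines $N$ in \eqref{eq:N}. The $z$-range $[N+x-y,N)$ has length $y-x$ and ends at $N$, so in every case it lies inside the prefix already computed. That prefix is given by Propositions~\ref{prop:q1}, \ref{prop:even-prefix}, \ref{prop:odd-r0-prefix}, \ref{prop:odd-rpos-prefix}, together with Lemma~\ref{lem:pre-y}, and those results read off $h(z)$ blockwise. On the other side, $g$ has period $x+y$ in every case of Theorem~\ref{thm:ordinary} (for $q$ odd, $r=0$ it also has period $2x$, which divides $x+y=(q+1)x$). Hence $g(z+2y)=g(z+y-x)$, and I will compute this value from the explicit ordinary word $G$.

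The first step is to write both words on the window as explicit run-length words, aligned by the same offset. For $q$ even, $r>0$, the window is $[x+r,(q+2)x)$ and $z+y-x$ ranges over $[(q+1)x+2r-x,\dots)$. It is cleanest to shift $G$ cyclically so that it starts at position $N+x-y+y-x=N$ modulo $x+y$. Lemma~\ref{lem:first-tail-block} already identifies $g(N+y+\cdot)$ on the first $x$ positions, which gives a convenient anchor. In each case $h$ on the window is a concatenation of blocks $0^x,1^x,2^x$ (from Lemma~\ref{lem:pre-y}) followed by the boundary blocks $2^r3^{x-r}$, $1^x3^r$, or $3^x$. The rotated ordinary word is likewise a concatenation of runs of $0,1,2$ of lengths $x$, $r$, and $x-r$.

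The second step is to compare the two words run by run. Inside the long alternating stretch $(0^x1^x)^{m-1}$, I expect the values of $g(z+y-x)$ to be the complement in the alternation, with the ordinary $1^x$ block aligned against $h$'s $0^x$ block and vice versa. That is exactly the offset phenomenon seen in Lemma~\ref{lem:first-tail-block}. Near the ends the runs split at offset $r$ or $x-r$, and I check the finitely many sub-intervals by hand, as in the table of Lemma~\ref{lem:first-tail-block}. Positions where $h(z)=3$ are automatic, since $g\le 2$.

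The main obstacle is the bookkeeping at the ends of the window. There the exceptional blocks $2^x1^x$ at indices $1,2$ and the boundary blocks of $h$ meet the $0^r2^{x-r}1^r$ or $2^r$ segments of $G$, and the sub-interval breakpoints $r$ and $x-r$ interleave differently depending on whether $2r<x$. I would handle this by splitting each such block at both $r$ and $x-r$ and verifying each piece. I would treat small $q$ separately: $q=1$ (where the window is just $[x+r,\,3x+r)$ minus the initial part), $q=2$, and $q=3$, because there the alternating stretch is empty and the exceptional blocks abut the boundary directly.
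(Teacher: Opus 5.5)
Your strategy is the same as the paper's proof. You reduce $g(z+2y)$ by the period $x+y$ (your $g(z+y-x)$ is the paper's $g(z-2x)$ shifted by one period), then compare explicit run-length words case by case.

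The gap is in the window you would check. For $q$ even and $r>0$ we have $N=(q+2)x$, so $N+x-y=3x-r$, not $x+r$. For $q=1$ the window is $[3x,3x+r)$, which is exactly the $3^r$ block, so that case is immediate. The slip matters, because on your window the claimed inequality is false. For $x+r\le z<2x-r$ (nonempty whenever $2r<x$) we have $h(z)=2$. Also $z+y-x=qx+2r+(z-x-r)$ lies in $[2mx+2r,(2m+1)x)$, inside the $2^{x-r}$ run of $G$, so $g(z+2y)=2$ as well. For example, $x=5$, $y=11$ gives $h(6)=2=g(28)$. Carried out as planned, your check would report a failure. This is also where the ``$2r<x$ versus $2r\ge x$'' interleaving you anticipate comes from.

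With the correct window $[N+x-y,N)$, that bookkeeping disappears. The arguments $z+y-x$ run over $[N,N+y-x)$. Modulo $x+y$ this is:
$[x,x+r)$ for $q=1$;
$[x,qx)$ for $q$ even, $r=0$;
$[x-r,qx)$ for $q$ even, $r>0$;
$[0,(q-1)x)$ for $q$ odd, $r=0$;
$[x,qx+r)$ for $q$ odd, $r>0$.
So $g$ only sees the alternating $0/1$ part of $G$, never the segments $0^r2^{x-r}1^r$, $2^r$ or $2^x$.

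On the $h$ side, the window never reaches the exceptional $2^x$ block at index $1$. Apart from the alternating stretch $(0^x1^x)^{m-1}$, its only pieces are:
an initial $1^r$ or $1^x$ (for $q$ even, $r>0$, resp.\ $q$ odd, $r=0$);
a terminal $3^r$, $3^x$, $2^r3^{x-r}$, $2^x$ (missing from your list) or $0^r2^{x-r}1^x3^r$.
Writing the two words side by side gives exactly the paper's pair words. For instance, for $q$ even, $r>0$ it is $(1,0)^r\bigl((0,1)^x(1,0)^x\bigr)^{m-1}(2,1)^r(3,1)^{x-r}$. No comparison of $2r$ with $x$ is needed. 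The cases $q=2,3$ are just $m=1$ with an empty alternating stretch, so they need no separate treatment.
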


\begin{proof}
We compare the explicit transient word with the corresponding ordinary
word.  A pair $(a,b)$ below denotes
\[
        (h(z),g(z+2y)).
\]
Every displayed pair has unequal entries.

\medskip
\noindent
\textbf{Case 1: $q=1$, $r>0$.}

Here $N=3x+r$ and $N+x-y=3x$.  The relevant portion of the transient
is $3^r$.  Reduction modulo the ordinary period $2x+r$ gives
$g(z+2y)=1$ throughout the interval.  Hence the pair word is
\[
        (3,1)^r.
\]

\medskip
\noindent
\textbf{Case 2: $q=2m$ even, $r=0$.}

The ordinary period is $(0^x1^x)^m2^x$ of length
\[
        P=(q+1)x.
\]
Since
\[
        2y=2P-2x,
\]
periodicity gives
\[
        g(z+2y)=g(z-2x),
\]
where the latter is interpreted using the periodic extension.
Aligning the words yields
\[
\Bigl((0,1)^x(1,0)^x\Bigr)^{m-1}(3,1)^x.
\]
All pairs are unequal.

\medskip
\noindent
\textbf{Case 3: $q=2m$ even, $r>0$.}

Again $P=x+y$ and
\[
        2y=2P-2x.
\]
The relevant alignment is
\[
(1,0)^r
\Bigl((0,1)^x(1,0)^x\Bigr)^{m-1}
(2,1)^r(3,1)^{x-r}.
\]
Every pair has different entries.

\medskip
\noindent
\textbf{Case 4: $q=2m+1\ge3$ odd, $r=0$.}

The ordinary period is $2x$, and $2y$ is a multiple of $2x$.
Therefore
\[
        g(z+2y)=g(z).
\]
The pair word is
\[
(1,0)^x
\Bigl((0,1)^x(1,0)^x\Bigr)^{m-1}
(2,1)^x.
\]
Again no entries agree.

\medskip
\noindent
\textbf{Case 5: $q=2m+1\ge3$ odd, $r>0$.}

Here $P=x+y$ and again
\[
        2y=2P-2x.
\]
The aligned pair word is
\[
\Bigl((0,1)^x(1,0)^x\Bigr)^{m-1}
(0,1)^r(2,1)^{x-r}
(1,0)^x(3,1)^r.
\]
Every pair is unequal.

Thus \eqref{eq:avoidance} holds in every case.
\end{proof}

\begin{lemma}[Bridge lemma]
\label{lem:bridge}
For every
\[
        N\le n<N+y,
\]
we have 
\begin{equation}
        h(n)=g(n+y).
\label{eq:bridge}
\end{equation}
\end{lemma}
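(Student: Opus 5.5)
We argue by strong induction on $n$ over the interval $N\le n<N+y$. The base range $N\le n<N+x$ is exactly Lemma~\ref{lem:first-tail-block}, so it remains to treat $N+x\le n<N+y$ under the hypothesis that \eqref{eq:bridge} holds for all $m$ with $N\le m<n$.

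For such $n$, note first that $n\ge N+x>y$, so both subtraction moves are legal and all three options in \eqref{eq:h-rec} are present. The $x$-predecessor satisfies $N\le n-x<n$, so by the induction hypothesis $h(n-x)=g(n-x+y)$. The pass option is $g(n)$. The ordinary recursion \eqref{eq:g-rec}, applied at $n+y$ (where both moves are legal), gives
\[
g(n+y)=\mex\{g(n+y-x),\,g(n)\}.
\]
Hence
\[
\mex\{h(n-x),g(n)\}=\mex\{g(n+y-x),g(n)\}=g(n+y).
\]

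It remains to handle the third option, $h(n-y)=h(z)$ with $z=n-y$. This lies in $[N+x-y,N)$, still inside the transient. Lemma~\ref{lem:avoidance} gives $h(z)\ne g(z+2y)=g(n+y)$. Lemma~\ref{lem:mex-stability}, with $a=h(n-x)$, $b=g(n)$, $c=h(n-y)$ and $d=g(n+y)$, then yields
\[
h(n)=\mex\{h(n-x),h(n-y),g(n)\}=g(n+y),
\]
which closes the induction.

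The substantive content is therefore entirely the avoidance Lemma~\ref{lem:avoidance}, which is already established; the rest is bookkeeping. Two points need care. First, the induction must run in increasing $n$, so that $n-x$ has already been settled by the time $n$ is treated; this holds because $x\ge1$. Second, one must check that the two-element mex $\mex\{g(n+y-x),g(n)\}$ really equals $g(n+y)$ as given by \eqref{eq:g-rec}. This holds since $n+y\ge y$, so both options of \eqref{eq:g-rec} are present there. If any boundary subtlety arose, for instance $N+x-y<0$, the range of $z$ would still be covered by Lemma~\ref{lem:avoidance} as stated, so I expect no obstacle.
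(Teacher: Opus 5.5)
Your proof is correct and follows the paper's argument step for step: Lemma~\ref{lem:first-tail-block} covers $N\le n<N+x$, and for $N+x\le n<N+y$ you use the induction hypothesis for $h(n-x)$, the ordinary recurrence at $n+y$, Lemma~\ref{lem:avoidance} to get $h(n-y)\ne g(n+y)$, and Lemma~\ref{lem:mex-stability} to conclude. Your hedge about $N+x-y<0$ is unnecessary, since $N-y\ge x$ in every case of \eqref{eq:N}, so $z=n-y\ge 2x>0$ always.
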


\begin{proof}
Lemma~\ref{lem:first-tail-block} proves the claim for
\[
        N\le n<N+x.
\]

Proceed by induction on $n$ through the remaining bridge
\[
        N+x\le n<N+y.
\]
Assume the equality has been proved for all previous indices at least
$N$.  Since $n-x\ge N$,
\[
        h(n-x)=g(n+y-x).
\]
Set
\[
        a=g(n+y-x),\qquad b=g(n).
\]
The ordinary recurrence at $n+y$ gives
\[
        g(n+y)=\mex\{a,b\}.
\]

Now set
\[
        c=h(n-y).
\]
Since
\[
        N+x-y\le n-y<N,
\]
Lemma~\ref{lem:avoidance} gives
\[
        c\ne g(n+y).
\]
By Lemma~\ref{lem:mex-stability},
\[
        \mex\{a,b,c\}=\mex\{a,b\}=g(n+y).
\]
Using \eqref{eq:h-rec},
\[
        h(n)=g(n+y).
\]
This completes the induction across the bridge.
\end{proof}

\begin{theorem}
\label{thm:eventual-translation}
For every
\[
        n\ge N,
\]
we have 
\begin{equation}\label{eq:eventual-translation}
        h(n)=g(n+y).
\end{equation}
\end{theorem}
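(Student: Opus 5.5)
We argue by strong induction on $n\ge N$. The base range $N\le n<N+y$ is exactly the Bridge Lemma (Lemma~\ref{lem:bridge}), so it remains to treat $n\ge N+y$. In that range both predecessors of $n$ that carry the pass, namely $n-x$ and $n-y$, are at least $N$. The pass option, with value $g(n)$, is always present because $n\ge N>x$. The plan is to show that the three options appearing in \eqref{eq:h-rec} are translates of the options of the ordinary recurrence at $n+y$, together with one extra value that never equals the target mex.

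Fix $n\ge N+y$ and assume \eqref{eq:eventual-translation} holds for every index in $[N,n)$. The induction hypothesis gives
\[
h(n-x)=g(n+y-x),\qquad h(n-y)=g(n).
\]
The options of $h(n)$ are therefore $a=g(n+y-x)$, the value $c=g(n)$ coming from the $y$-move, and the pass value $g(n)$. The pass contributes nothing new, since it repeats $c$. Hence
\[
h(n)=\mex\{g(n+y-x),\,g(n)\}.
\]
This is precisely the ordinary recurrence \eqref{eq:g-rec} evaluated at $n+y$, where both subtractions are legal because $n+y\ge y$. So $h(n)=g(n+y)$, which closes the induction.

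The only point needing care is that the $y$-move option coincides exactly with the pass option once $n-y\ge N$. This is what makes the pass redundant, and no avoidance argument in the style of Lemma~\ref{lem:avoidance} is needed here. The genuine obstacle, namely the range where $n-y$ still lies in the transient, has already been handled by Lemmas~\ref{lem:avoidance} and~\ref{lem:bridge}. What remains is to check that the induction ranges dovetail: the bridge covers $[N,N+y)$, and for $n\ge N+y$ we have $n-y\ge N$ and $n-x\ge N+y-x\ge N$, so every predecessor used lies in the already-proved range $[N,n)$.
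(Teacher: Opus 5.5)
Your proof is correct and follows the paper's argument essentially verbatim. The Bridge Lemma covers $N\le n<N+y$. For $n\ge N+y$, strong induction gives $h(n-x)=g(n+y-x)$ and $h(n-y)=g(n)$, so the pass option duplicates the $y$-option and $h(n)=\mex\{g(n+y-x),g(n)\}=g(n+y)$ by the ordinary recurrence at $n+y$, with your range check $n-x\ge N+y-x\ge N$ playing the role of the paper's $n-x>n-y\ge N$.
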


\begin{proof}
Lemma~\ref{lem:bridge} establishes the claim through $N+y-1$.

Now let $n\ge N+y$, and suppose the claim holds at every smaller index
at least $N$.  Then
\[
        n-y\ge N
\]
and, because $x<y$,
\[
        n-x>n-y\ge N.
\]
Hence
\[
        h(n-x)=g(n+y-x),
        \qquad
        h(n-y)=g(n).
\]
Substituting into \eqref{eq:h-rec},
\[
\begin{aligned}
h(n)
&=\mex\{g(n+y-x),g(n),g(n)\}\\
&=\mex\{g(n+y-x),g(n)\}\\
&=g(n+y),
\end{aligned}
\]
where the last equality is precisely the ordinary recurrence
\eqref{eq:g-rec} evaluated at $n+y$.
Strong induction completes the proof.
\end{proof}

\section{The Grundy-value sequence of $S^\ast = \{x, y\}$} \label{Main Result}

\begin{theorem}
Let $S^\ast=\{x,y\}$, $0<x<y$ and let 
\[
        y=qx+r,\qquad0\le r<x.
\]

Then the Grundy-value sequence $h$ of $S^\ast$ is as follows.\\

\begin{enumerate}

\item If $q=1$ $($so $0<r<x)$, then
\begin{equation}\label{eq:full-q1}
h=
0^x2^x1^x3^r
\left(0^x1^x2^r\right)^\infty.
\end{equation}

\item If $q=2m$ is even and $r=0$, then
\begin{equation}\label{eq:full-even-r0}
h=
0^x2^x1^x(0^x1^x)^{m-1}3^x
\left((0^x1^x)^m2^x\right)^\infty.
\end{equation}

\item If $q=2m$ is even and $0<r<x$, then
\begin{equation}\label{eq:full-even-rpos}
h=
0^x2^x1^x(0^x1^x)^{m-1}2^r3^{x-r} \left(
1^r(0^x1^x)^m0^r2^{x-r}
\right)^\infty.
\end{equation}

\item If $q=2m+1\ge3$ is odd and $r=0$, then
\begin{equation}\label{eq:full-odd-r0}
h=
0^x2^x1^x(0^x1^x)^{m-1}2^x
\left(1^x0^x\right)^\infty.
\end{equation}

\item If $q=2m+1$ is odd and $0<r<x$, then
\begin{equation}\label{eq:full-odd-rpos}
h=
0^x2^x1^x(0^x1^x)^{m-1}
0^r2^{x-r}1^x3^r \left((0^x1^x)^{m+1}2^r\right)^\infty.
\end{equation}

\end{enumerate}

Equivalently, in all five cases,
\[
        h(n)=g(n+y)\qquad(n\ge N),
\]
where $N$ is given by \eqref{eq:N}.
\end{theorem}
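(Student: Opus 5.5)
The plan is to assemble the theorem from two pieces already established: the transient prefixes from Propositions~\ref{prop:q1}, \ref{prop:even-prefix}, \ref{prop:odd-r0-prefix}, \ref{prop:odd-rpos-prefix}, and the tail identity $h(n)=g(n+y)$ for $n\ge N$ from Theorem~\ref{thm:eventual-translation}. First I will check, case by case, that the prefix computed in each proposition has length exactly $N$ as defined in \eqref{eq:N}: $3x+r$ for $q=1$, $(q+2)x$ for even $q$, $(q+1)x$ for odd $q$ with $r=0$, and $(q+2)x+r$ for odd $q$ with $r>0$. This yields the nonperiodic part of each of \eqref{eq:full-q1}--\eqref{eq:full-odd-rpos}, and the closing sentence of the theorem is exactly Theorem~\ref{thm:eventual-translation}.

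For the tail, $h$ restricted to $[N,\infty)$ is the ordinary sequence $g$ read from position $N+y$. By Theorem~\ref{thm:ordinary}, $g$ is purely periodic from $0$ with period $P=x+y$ (or $2x$ when $q$ is odd and $r=0$). So the tail word is a cyclic rotation of the ordinary period word $G$, starting at offset $(N+y)\bmod P$. I will compute this offset in each case. For $q=1$ and odd $q$ with $r>0$, $N+y=2P$, so the offset is $0$ and the tail is $G^\infty$, which gives $(0^x1^x2^r)^\infty$ and $((0^x1^x)^{m+1}2^r)^\infty$. For even $q$ with $r=0$, $N+y=(2q+2)x=2P-2x+2x$; more precisely $P=(q+1)x$, so $N+y\equiv x\pmod P$. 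For even $q$ with $r>0$, $N+y=(2q+2)x+r=2P-r\equiv P-r$. For odd $q$ with $r=0$, $N+y=(2q+1)x\equiv x\pmod{2x}$.

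The remaining work is to rotate $G$ by the offset and match it against the displayed periodic words. Rotating $(0^x1^x)^\infty$ by $x$ gives $(1^x0^x)^\infty$, as in case~(4). Rotating $(0^x1^x)^m0^r2^{x-r}1^r$ by $P-r$ gives $1^r(0^x1^x)^m0^r2^{x-r}$, matching case~(3).

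The main obstacle is case~(2), because the naive rotation by $x$ of $(0^x1^x)^m2^x$ is $1^x(0^x1^x)^{m-1}2^x0^x$, not the displayed $(0^x1^x)^m2^x$. I will recheck the offset and the value of $N$ there. If the displayed word is indeed off by a rotation, I expect the discrepancy to be absorbed by where the prefix ends: the tail as stated would then be equivalent after re-bracketing, or the statement should be read with the rotated period. In writing the proof I will state the tail as $G$ rotated by $(N+y)\bmod P$ and verify agreement with the displayed word directly from the first few tail values, using the table in Lemma~\ref{lem:first-tail-block}, which gives $h(N)=0$ for even $q$ with $r=0$. I will then adjust the bracketing so that the periodic part begins exactly at $N$.
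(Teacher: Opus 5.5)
Your route is the same as the paper's. You take the prefixes from the four propositions, use Theorem~\ref{thm:eventual-translation} for $n\ge N$, and read the tail as the ordinary period word $G$ rotated by $(N+y)\bmod P$. Your prefix lengths are correct. So are your offsets for $q=1$, for odd $q$ with $r>0$, for even $q$ with $r>0$, and for odd $q$ with $r=0$, and the resulting rotations do give the periodic parts of (1), (3), (4) and (5).

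Case (2), however, rests on an arithmetic slip, so as written your proposal does not prove it. With $q=2m$ and $r=0$ you have $N=(q+2)x$ and $y=qx$. Hence $N+y=(2q+2)x=2(q+1)x=2P$, and the offset is $0$, not $x$. The tail is therefore $G^\infty=((0^x1^x)^m2^x)^\infty$, exactly as displayed, and the ``main obstacle'' does not exist. The table in Lemma~\ref{lem:first-tail-block} confirms this: it gives $h(N+t)=0$ for $0\le t<x$, whereas your rotation by $x$ would begin with $1^x$.

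The fallback you sketch would not have been valid. Proposition~\ref{prop:even-prefix} pins the prefix to length exactly $N$, ending in $3^x$. A genuine rotation discrepancy therefore could not be absorbed by re-bracketing. ``Reading the statement with the rotated period'' would amount to declaring the displayed formula false. Once you correct the offset computation to $N+y\equiv 0\pmod P$, your argument coincides with the paper's proof.
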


\begin{proof}
The finite prefixes are
Propositions~\ref{prop:q1}, \ref{prop:even-prefix},
\ref{prop:odd-r0-prefix}, and \ref{prop:odd-rpos-prefix}.
Theorem~\ref{thm:eventual-translation} identifies the remainder of the
sequence as the $y$-shifted ordinary sequence.

It remains only to note where that shift enters the ordinary period.

For $q=1$,
\[
        N+y=2(2x+r),
\]
so the tail starts at the beginning of the ordinary period.

For even $q$ and $r=0$,
\[
        N+y=2(x+y),
\]
again starting at the beginning of the period.

For even $q$ and $r>0$,
\[
        N+y=2(x+y)-r,
\]
so the tail starts with the final $1^r$ block of
\eqref{eq:ordinary-even}, then wraps around.  This gives
\eqref{eq:full-even-rpos}.

For odd $q$ and $r=0$, reduction of $N+y$ modulo $2x$ gives $x$,
so the tail starts at the $1^x$ half of the ordinary word.

For odd $q$ and $r>0$,
\[
        N+y=2(x+y),
\]
so the tail begins at the start of the ordinary period.
\end{proof}

\section{Concluding remarks}\label{Remarks}

\begin{theorem}
\label{thm:period}
The eventual least period of the pass-available sequence is
\begin{equation}\label{eq:period}
P=
\begin{cases}
2x,& y=(2k+1)x\text{ for some }k\ge0,\\[2mm]
x+y,&\text{otherwise}.
\end{cases}
\end{equation}
\end{theorem}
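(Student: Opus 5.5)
The plan is to reduce everything to the ordinary sequence via Theorem~\ref{thm:eventual-translation}. For $n\ge N$ we have $h(n)=g(n+y)$, so the tail of $h$ is a shift of the tail of $g$. Any shift of a purely periodic sequence has the same least period, since periods are invariant under translation. By Theorem~\ref{thm:ordinary}, $g$ is purely periodic from $n=0$. Hence the eventual least period of $h$ equals the least period of $g$, and it only remains to read that period off Theorem~\ref{thm:ordinary} and match it to the two cases of \eqref{eq:period}.

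For the case split, write $y=qx+r$ as in \eqref{eq:division}. The condition $y=(2k+1)x$ for some $k\ge0$ says exactly that $r=0$ and $q$ is odd. That is case (3) of Theorem~\ref{thm:ordinary}, where the least period is $2x$. Every other pair falls into case (1) (even $q$, any $r$) or case (2) (odd $q$, $r>0$), where the least period is $x+y$. The case $q=1$, $r=0$ is excluded by $x<y$ together with the first branch, and $q=1$, $r>0$ sits in case (2). So the formula follows once we trust the minimality claims stated in Theorem~\ref{thm:ordinary}.

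The only step needing care is the minimality of $x+y$ when it is not directly quoted. I would check it from the explicit words \eqref{eq:ordinary-even} and \eqref{eq:ordinary-odd-r}. Each period word contains exactly one maximal run of $2$'s, of length $x-r$ or $x$ in the even case and $r$ in the odd case. No $2$ ever appears outside that run. Any period $p$ of the bi-infinite periodic word must therefore map this unique run in each period to itself, so $p$ is a multiple of $x+y$.

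As a sanity check, the tails displayed in the main theorem make this visible directly. Case (1) has $(0^x1^x2^r)^\infty$, case (2) has $((0^x1^x)^m2^x)^\infty$, case (3) has $(1^r(0^x1^x)^m0^r2^{x-r})^\infty$, and case (5) has $((0^x1^x)^{m+1}2^r)^\infty$. Each contains a single $2$-block per period of length $x+y$. Case (4) has $(1^x0^x)^\infty$, with period $2x$, which is minimal because the blocks have length $x$ and alternate. I expect no real obstacle beyond this bookkeeping.
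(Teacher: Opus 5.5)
Your proposal is correct and follows essentially the same route as the paper. Both transfer to $g$ via Theorem~\ref{thm:eventual-translation}, use pure periodicity of $g$ to see that any eventual period of the tail is a global period of $g$, and read off the least period from Theorem~\ref{thm:ordinary}; the paper spells out that middle step by shifting indices by multiples of the ordinary period, which your phrase ``periods are invariant under translation'' compresses, while your unique-run-of-$2$'s check of minimality is a valid extra that the paper skips by simply quoting the exact least periods from Theorem~\ref{thm:ordinary}.
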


\vspace{10pt}

\begin{proof}
By Theorem~\ref{thm:eventual-translation}, the tail of $h$ is a shift
of the ordinary sequence $g$.  Hence every period of $g$ is an eventual
period of $h$.

Conversely, suppose the tail of $h$, and therefore a tail of $g$, had
a positive period $d$ smaller than the ordinary least period.
Because $g$ is purely periodic from $0$, with ordinary period $P$,
for any $n$ choose $k$ so large that both $n+kP$ and $n+kP+d$ lie
beyond the assumed eventual threshold.  Then
\[
\begin{aligned}
g(n+d)
&=g(n+kP+d)\\
&=g(n+kP)\\
&=g(n).
\end{aligned}
\]
Thus $d$ would be a global period of $g$, contradicting the exact least
period in Theorem~\ref{thm:ordinary}.

Therefore the eventual least period of $h$ equals the ordinary least
period, which is precisely \eqref{eq:period}.
\end{proof}

The index $N$ is not merely an upper bound.

\begin{theorem}
\label{thm:least-preperiod}
The number $N$ in \eqref{eq:N} is the least index from which the
pass-available sequence is periodic with its eventual least period $P$.
\end{theorem}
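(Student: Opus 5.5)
Two things have to be shown. First, $h$ is periodic with period $P$ from $N$ onward. Second, $h$ is not periodic with period $P$ from $N-1$ onward. The first is immediate from Theorem~\ref{thm:eventual-translation}: for $n\ge N$ we have $h(n)=g(n+y)$, and $g$ has least period $P$ by Theorem~\ref{thm:ordinary}. Hence $h(n+P)=h(n)$ for all $n\ge N$. For minimality it suffices to find a single witness, namely
\[
h(N-1)\ne h(N-1+P).
\]
Indeed, if the sequence were $P$-periodic from some index $M<N$, it would in particular be $P$-periodic from $N-1$. Since $N-1+P\ge N$, Theorem~\ref{thm:eventual-translation} gives $h(N-1+P)=g(N-1+P+y)=g(N-1+y)$, so the task reduces to showing
\[
h(N-1)\ne g(N+y-1).
\]

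I will verify this case by case, using the explicit prefixes (Propositions~\ref{prop:q1}--\ref{prop:odd-rpos-prefix}) for $h(N-1)$. For $g(N+y-1)$ I will use the positions of $N+y$ in the ordinary period, already computed in the proof of the main theorem. In each case, $N+y-1$ is the last symbol of the ordinary word preceding the start of the tail.

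\begin{itemize}
\item $q=1$: $h(N-1)=3$, and $N+y\equiv 0$ modulo $2x+r$, so $g(N+y-1)=2$, the last letter of $0^x1^x2^r$.
\item $q$ even, $r=0$: $h(N-1)=3$, and $N+y\equiv0$ modulo the period, so $g(N+y-1)=2$.
\item $q$ even, $r>0$: $h(N-1)=3$ and $N+y\equiv -r$ modulo the period, so $g(N+y-1)$ is the last letter of $2^{x-r}$, namely $2$.
\item $q$ odd, $r=0$: $h(N-1)=2$ and $N+y\equiv x$ modulo $2x$, so $g(N+y-1)=0$.
\item $q$ odd, $r>0$: $h(N-1)=3$ and $g(N+y-1)=2$, the last letter of $2^r$.
\end{itemize}

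In every case the two values differ. In all but one case this is automatic, since the value $3$ never occurs in $g$.

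The only step needing care is correctly reading off the residue of $N+y-1$ in the ordinary period for each case, especially the even-$q$, $r>0$ case where the tail begins mid-period. The main theorem's proof already supplies these residues, so no new obstacle arises. I will also note explicitly that $N-1\ge 0$, which holds since $N\ge 2x$, so the witness index is legitimate.
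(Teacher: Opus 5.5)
Your proposal is correct and follows the paper's proof essentially step for step: the same witness $h(N-1)\ne h(N-1+P)=g(N-1+y)$ and the same five-case comparison of the last transient letter with the ordinary word. Your extra remark that the value $3$ never occurs in $g$ is a worthwhile shortcut the paper does not use: it settles every case except $q$ odd, $r=0$ without locating $N-1+y$ in the ordinary period.
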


\begin{proof}
Periodicity with period $P$ holds for all $n\ge N$ by
Theorems~\ref{thm:eventual-translation} and \ref{thm:period}.

It remains to show that periodicity cannot begin at $N-1$.
Since $N-1+P\ge N$, Theorem~\ref{thm:eventual-translation} gives
\[
        h(N-1+P)
        =
        g(N-1+P+y)
        =
        g(N-1+y),
\]
where the last equality uses the ordinary period $P$.

We compare this value with the final value of the explicit transient.

If $r>0$, then in every case the prefix ends in $3$, so
\[
        h(N-1)=3.
\]
For $q=1$ or odd $q$,
the index $N-1+y$ is congruent to the final position of an ordinary
period, where the value is $2$.
For even $q$ and $r>0$, reducing by one ordinary period puts
$N-1+y$ at the final position of the $2^{x-r}$ block, again giving
\[
        g(N-1+y)=2.
\]
Thus $3\ne2$.

If $r=0$ and $q$ is even, the prefix again ends in $3$, while
$g(N-1+y)=2$.

Finally suppose $r=0$ and $q$ is odd.
Then the prefix ends in $2$, whereas the ordinary sequence is
$(0^x1^x)^\infty$.  Since
\[
        N-1+y=(2q+1)x-1
\]
is congruent modulo $2x$ to $x-1$, it lies in a $0^x$ block.
Hence
\[
        h(N-1)=2
        \ne
        0=g(N-1+y)
        =h(N-1+P).
\]

Therefore the period relation fails at $N-1$ in every case.
Hence $N$ is the least preperiod.
\end{proof}

The single pass changes the structure of the two-element subtraction
game in a precise way. First, it destroys pure periodicity at the origin in general.  The
finite transient can contain the new nimber $3$. Second, the perturbation is finite.  After the exact threshold $N$,
the game forgets the pass in the particularly strong sense
\[
        h(n)=g(n+y).
\]
Thus the pass-available sequence is not merely eventually periodic:
its tail is literally a translate of the ordinary nim-sequence. Third, the single pass changes neither the eventual least period nor the underlying ordinary periodic word, except for a cyclic rotation. It therefore acts as a finite-state transient perturbation of the
ordinary two-element subtraction game. Finally, the quotient--remainder decomposition
\[
        y=qx+r
\]
has a structural interpretation.  The parity of $q$ determines which
alternating $x$-block is encountered when the $y$-move first becomes
active, while $r$ determines where that boundary cuts through an
$x$-block.  These two data completely determine the transient.

\section{Tool and computational resource disclosure}\label{Disclosure}

The author supports the Leiden Declaration on Artificial Intelligence and Mathematics. GPT-5.6 Sol was used for the straightforward (but tedious) calculations in this paper.

\end{document}